\newcommand{\Rmnum}[1]{\expandafter\@slowromancap\romannumeral #1@}
\begin{document}

\newtheorem{theorem}{Theorem}
\newtheorem{observation}{Observation}
\newtheorem{corollary}{Corollary}
\newtheorem{algorithm}[theorem]{Algorithm}
\newtheorem{definition}{Definition}
\newtheorem{guess}{Conjecture}
\newtheorem{claim}{Claim}
\newtheorem{problem}[theorem]{Problem}
\newtheorem{question}{Question}
\newtheorem{lemma}{Lemma}
\newtheorem{proposition}{Proposition}
\newtheorem{fact}{Fact}

\makeatletter
  \newcommand\figcaption{\def\@captype{figure}\caption}
  \newcommand\tabcaption{\def\@captype{table}\caption}
\makeatother

\newtheorem{acknowledgement}[theorem]{Acknowledgement}

\newtheorem{axiom}[theorem]{Axiom}
\newtheorem{case}[theorem]{Case}
\newtheorem{conclusion}[theorem]{Conclusion}
\newtheorem{condition}[theorem]{Condition}
\newtheorem{conjecture}[theorem]{Conjecture}
\newtheorem{criterion}[theorem]{Criterion}
\newtheorem{example}[theorem]{Example}
\newtheorem{exercise}[theorem]{Exercise}
\newtheorem{notation}{Notation}
\newtheorem{solution}[theorem]{Solution}
\newtheorem{summary}[theorem]{Summary}

\newenvironment{proof}{\noindent {\bf
Proof.}}{\rule{3mm}{3mm}\par\medskip}
\newcommand{\remark}{\medskip\par\noindent {\bf Remark.~~}}
\newcommand{\pp}{{\it p.}}
\newcommand{\de}{\em}
\newcommand{\mad}{\rm mad}
\newcommand{\qf}{Q({\cal F},s)}
\newcommand{\qff}{Q({\cal F}',s)}
\newcommand{\qfff}{Q({\cal F}'',s)}
\newcommand{\f}{{\cal F}}
\newcommand{\ff}{{\cal F}'}
\newcommand{\fff}{{\cal F}''}
\newcommand{\fs}{{\cal F},s}
\newcommand{\s}{\mathcal{S}}
\newcommand{\G}{\Gamma}
\newcommand{\g}{(G_3, L_{f_3})}
\newcommand{\wrt}{with respect to }
\newcommand {\nk}{ Nim$_{\rm{k}} $  }
\newcommand{\cl}{ {\cal L}}

\newcommand{\q}{\uppercase\expandafter{\romannumeral1}}
\newcommand{\qq}{\uppercase\expandafter{\romannumeral2}}
\newcommand{\qqq}{\uppercase\expandafter{\romannumeral3}}
\newcommand{\qqqq}{\uppercase\expandafter{\romannumeral4}}
\newcommand{\qqqqq}{\uppercase\expandafter{\romannumeral5}}
\newcommand{\qqqqqq}{\uppercase\expandafter{\romannumeral6}}

\newcommand{\qed}{\hfill\rule{0.5em}{0.809em}}

\newcommand{\var}{\vartriangle}

\title{{\large \bf List $4$-colouring of planar graphs}}

\author{  Xuding Zhu\thanks{Department of Mathematics, Zhejiang Normal University,  China.  E-mail: xudingzhu@gmail.com.  Grant numbers: NSFC 11971438,U20A2068, ZJNSFC LD19A010001.}}

\maketitle

\begin{abstract}
	
	This paper proves the following result: If $G$ is a planar graph and $L$ is a $4$-list assignment of $G$ such that   $|L(x) \cap L(y)| \le 2$ for every edge $xy$, then $G$ is $L$-colourable. This answers a question asked by   Kratochv\'{i}l,   Tuza  and  Voigt in  [Journal  of Graph Theory, 27(1):43--49, 1998].
	
	\noindent {\bf Keywords:}
	planar graph;  lists with separation; list colouring.

\end{abstract}

\section{Introduction}

A {\em  list assignment} of a graph $G$ is a mapping $L$ which assigns to each vertex $v$ of $G$ a set
$L(v)$ of   permissible colours. An {\em $L$-colouring} of $G$ is a proper colouring $f$ of $G$ such that for each vertex $v$ of $G$, $f(v) \in L(v)$. We say $G$ is {\em $L$-colourable} if $G$ has an $L$-colouring. A {\em $k$-list assignment} of $G$ is a list assignment $L$ with $|L(v)| \ge k$ for each vertex $v$.  We say $G$ is {\em $k$-choosable} if $G$ is $L$-colourable for any $k$-list assignment $L$ of $G$. The {\em choice  number} $ch(G)$ of $G$ is the minimum integer $k$ such that
$G$ is $k$-choosable.

It is  known that there are planar graphs $G$ and $4$-list assignments $L$ of $G$ such that $G$ is not $L$-colourable \cite{Voigt1993}. 
A natural direction of research is to put restrictions on the list assignments so that for any planar graph $G$ and any  4-list assignment $L$ of $G$ satisfying the restrictions, $G$ is $L$-colourable. Indeed, 
the Four Colour Theorem can be formulated as such a result: For any planar graph $G$, if $L$ is a 4-list assignment of  $G$ with $L(x)=L(y)$ for any edge $xy$ of $G$, then $G$ is $L$-colourable. 

Are there other natural restrictions for which the corresponding ``list $4$-colouring theorem" is true?

By changing the equality to inequality in the above formulation of the Four Colour Theorem, one may ask the following question:

\medskip

{\em 
	Is it true that for any planar graph $G$, any $4$-list assignment $L$ of  $G$ such that  $L(x) \ne L(y)$ (or equivalently, $|L(x) \cap L(y)| \le 3$),    $G$ is  $L$-colourable?
}
\medskip

The answer is NO. Mirzakhani \cite{Mirzakhani} constructed   a planar graph $G$  and a $4$-list assignment $L$ of $G$ such that 
$|L(x) \cap L(y)| \le 3$, and $G$ is not $L$-colourable.

On the other hand,   Kratochv\'{i}l,   Tuza  and  Voigt \cite{KTV1998} proved that for any planar graph $G$, for any 4-list assignment $L$ of $G$ such that for any edge $xy$, $|L(x) \cap L(y)| \le 1$,  $G$ is $L$-colourable. 
Then they  asked the following question:

\begin{question}\cite{KTV1998}
\label{q1}
Is it true that for any planar graph $G$ and any   4-list assignment $L$ of  $G$ such that $|L(x) \cap L(y)| \le 2$ for every edge $xy$, $G$ is   $L$-colourable?
\end{question}

This question received a lot of  attention \cite{BCD,CLS,DEKO, EKT, FKK,HZ,KL2015,KMS,Skr,Smith, WWY}. Most of the works deal with variations of this problem. There was not much progress on the question itself.
In this paper, we answer this question in  affirmative.

\begin{definition}
	\label{def-s-separated}
	Assume $G$ is a graph and $k, s$ are   positive integers. A {\em $(\star,  s)$-list assignment } of $G$ is a  list assignment $L$ of $G$ such that   $|L(x)\cap L(y)| \le s$ for each edge $xy$. A {\em $(k,  s)$-list assignment } of $G$ is a  $(\star,  s)$-list assignment   of $G$ with $|L(v)|\ge k$ for each vertex $v$.
	A graph $G$ is called $(k,  s)$-choosable if $G$ is $L$-colourable for any  $(k,  s)$-list assignment $L$ of $G$.
\end{definition}
The following is the main result of this paper.

\begin{theorem}\label{main}
	Every planar graph is $(4,  2)$-choosable.
\end{theorem}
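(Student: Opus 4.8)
The plan is to prove a stronger, more flexible statement by induction on $|V(G)|$, in the spirit of Thomassen's proof that planar graphs are $5$-choosable, and to treat the separation hypothesis $|L(x)\cap L(y)|\le 2$ not as a constraint to be preserved but as the essential \emph{resource} that makes a list of size $4$ behave, for colouring purposes, almost like a list of size $5$. First I would reduce to the case that $G$ is $2$-connected and fix a plane embedding with outer boundary cycle $C$. Then I would strengthen the inductive hypothesis to something like: $C$ contains a short precoloured segment (one edge or one vertex), every other vertex of $C$ carries a list of size $\ge 3$, every interior vertex carries a list of size $\ge 4$, and the whole assignment is $(\star,2)$-separated. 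The base case and the handling of the precoloured segment mirror Thomassen; the result of \cite{KTV1998} (the $s=1$ case) gives a useful sanity check for how the separation should enter.

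The engine of the induction is the standard boundary reduction: pick the boundary vertex $v$ next to the precoloured segment, and either split along a chord of $C$ through $v$, or---when $v$ has no chord---look at its fan $u_1,\dots,u_m$ of interior neighbours (which becomes a boundary path once $v$ is removed), reserve a set $R\subseteq L(v)$ of colours for $v$, delete $R$ from each $u_i$, and recurse on $G-v$. The arithmetic is exactly where the problem lives. In the $5$-choosable setting one reserves $|R|=2$ and the interior neighbours drop from $\ge 5$ to $\ge 3$, which is precisely the size a new boundary vertex needs; the identity ``$5=3+2$'' closes the induction. For $(4,2)$ the corresponding identity would have to read ``$4=3+2$,'' which is false by one: reserving two colours and deleting both from a size-$4$ neighbour leaves only $2$, one short of the boundary requirement. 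Thus the entire difficulty is to recover a single unit of slack, uniformly, and this is where separation must be spent.

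The mechanism I would use is the following. Because $|L(v)\cap L(u_i)|\le 2$, a reserved pair $R=\{a,b\}\subseteq L(v)$ deletes \emph{both} of its colours from $u_i$ only in the exceptional case $L(v)\cap L(u_i)=\{a,b\}$; for every other fan vertex at most one colour is removed, so $u_i$ retains $\ge 3$. Since $L(v)$ minus the precoloured neighbour's colour has at least three elements, there are at least $\binom{3}{2}=3$ admissible choices of the pair $R$, and separation guarantees that each fan vertex ``objects'' to at most one of them; a counting argument then lets me choose $R$ so that the set of doubly-hit vertices (those dropping to size $2$) is sparse and, crucially, non-adjacent along the new boundary. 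Each such size-$2$ vertex is then colourable by ordering the recursion so that it is coloured while only one of its two boundary neighbours is already coloured: a single coloured neighbour contributes a single colour and, by separation, blocks at most one of its two available colours, leaving a legal choice. In effect, separation is converted first into ``at most one colour lost per fan vertex'' and then into ``a lone coloured neighbour costs at most one colour,'' and these two uses together buy back the missing unit.

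The hard part, I expect, is twofold. First, one cannot simply triangulate at the outset as in the $5$-choosable argument: adding a chord $xy$ across a non-triangular face is legitimate only when $|L(x)\cap L(y)|\le 2$, and a face need not satisfy this, so the triangulation must be produced locally and interleaved with the colouring rather than assumed once and for all. (That separation genuinely matters, and cannot be relaxed to $s=3$, is witnessed by Mirzakhani's example \cite{Mirzakhani}.) Second, and more seriously, because the slack is only one unit, a bare list-size count will not survive the recursion: the size-$2$ vertices created in one step must not be allowed to cascade. I would therefore replace raw list sizes by a potential function that simultaneously records, for the current boundary configuration, the list sizes and the pairwise separations among consecutive boundary lists, and prove that every reduction strictly decreases this potential while keeping it in the colourable range. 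Pinning down the correct potential, identifying the minimal boundary substructure on which to evaluate it, and checking its monotonicity across all the boundary cases (chord or no chord, short or long fan, with or without a freshly created size-$2$ vertex) is, I anticipate, where essentially all of the real work lies; the individual verifications should be elementary but numerous.
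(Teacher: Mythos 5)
Your diagnosis of the difficulty is exactly right --- the arithmetic ``$4=3+2$'' fails by one unit and separation must buy it back --- and the paper does follow the Thomassen-style induction you describe. But the two mechanisms you propose to close the induction do not work as stated, and the one you defer to (``the correct potential function'') is precisely the missing content. Concretely: your counting argument for the reserved pair $R$ only shows that \emph{some} choice of $R$ doubly hits at most a third of the fan vertices; it cannot force the doubly-hit set to be sparse, let alone non-adjacent. With $L(v)$ reduced to $\{a,b,c\}$ and candidate pairs $\{a,b\},\{a,c\},\{b,c\}$, a fan in which consecutive vertices $u_1,u_2$ share $\{a,b\}$ with $v$, $u_3,u_4$ share $\{a,c\}$, and $u_5,u_6$ share $\{b,c\}$ defeats every choice of $R$: two adjacent vertices of the new boundary drop to size $2$, and your ``order the recursion'' step has no way to colour two adjacent size-$2$ boundary vertices. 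So the cascade you worry about in your last paragraph is not hypothetical; it can already occur at the first reduction step, and no potential function is exhibited to rule it out.

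What the paper does instead is commit to a precise invariant that permits \emph{at most one} boundary vertex $v^*$ of list size $2$, requires $v^*$ to have a ``good'' primary boundary neighbour $u$ (meaning $|\tilde{L}(u)\cap L(v^*)|\le 1$ or $|L(u)|=4$), and --- this is the compensating device absent from your sketch --- lets the precoloured root edge carry a set of \emph{two} ordered colour pairs whenever such a $v^*$ exists. The good-neighbour condition guarantees a colour $c\in L(v^*)\setminus L(u)$, so only one colour ever needs to be deleted from the fan of the vertex being removed, never two; and the two-pair root list is propagated through chords by the key claim that the outer piece $G_1$ at a chord $xy$ admits two colourings differing on $(x,y)$ --- itself a delicate analysis around the common interior neighbour $z$ of $x$ and $y$, where separation at $z$ is finally used to reach a contradiction. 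Neither the one-deficient-vertex-plus-good-neighbour invariant nor the two-colourings-at-a-chord lemma is recoverable from your proposal, and together they constitute essentially the whole proof.
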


\section{The proof}

It  suffices to prove Theorem \ref{main} for 2-connected near-triangulations of the plane: for each   non-triangular  face, we can add a new vertex adjacent to all vertices of the face,      and assign new colours to the added vertex so that the resulting list assignment is still a $(4, 2)$-list assignment.  For   a 2-connected plane graph $G$, we denote by $B(G)$ the boundary cycle of $G$.

\begin{definition}
	\label{def-part}
	A  {\em rooted plane graph}  is a  pair $(G, v_1v_2)$, where $G$ is a 2-connected near-triangulation   of the plane, and  
  $v_1v_2$ is a boundary edge. 
\end{definition}
 The  vertices $v_1, v_2$  are called the  {\em root vertices} and $v_1v_2$ is called the {\em root edge}.
\begin{definition}
	Assume  $(G, v_1v_2)$ is a rooted plane graph. Assume $v$ is a non-root boundary vertex, and $N_G(v) \cap B(G) = \{u_1,u_2, \ldots, u_k\}$ ($k \ge 2$), and $(v, u_1, u_2, \ldots, u_k)$ occur in $B(G)$ is this cyclic order. The vertices $u_1, \ldots, u_k$ are called the {\em boundary neighbours} of $v$. If the rooted edge is contained in the boundary path from $u_i$ to $u_{i+1}$, then $u_i$ and $u_{i+1}$ are called the {\em primary boundary neighbours} of $v$. 
\end{definition}

Note that each non-root boundary vertex of a rooted plane graph has exactly two primary boundary neighbours.

\begin{definition}
	A  {\em   list assignment} of a rooted plane graph  $(G, v_1v_2)$ is a mapping $L$ which assigns to each vertex $v \ne v_1,v_2$ a set $L(v)$ of colours, and assigns to the ordered pair $(v_1,v_2)$ a set $L(v_1,v_2)$ of ordered   pairs of distinct colours. An $L$-colouring of $(G, v_1v_2)$ is a proper colouring $f$ of $G$ such that for each $v \ne v_1,v_2$, $f(v) \in L(v)$, and $(f(v_1), f(v_2)) \in L(v_1,v_2)$.
\end{definition}

Assume $L$ is a list assignment of $(G, v_1v_2)$. The list assignment $\tilde{L}$ of $G$ associated to $L$ is the list assignment of $G$ defined as $\tilde{L}(v)=L(v)$ for $v \ne v_1, v_2$ and $\tilde{L}(v_1)=\{c: \exists d, (c,d) \in L(v_1,v_2)\}$ and 
$\tilde{L}(v_2)=\{d: \exists c, (c,d) \in L(v_1,v_2)\}$. We say $L$ is a $(\star,  2)$-list assignment of $(G, v_1v_2)$ if $\tilde{L}$ is a 
$(\star,  2)$-list assignment of $G$. 

\begin{definition}
	Assume $L$ is a  $(\star,  2)$-list assignment of $(G, v_1v_2)$, and $v \in B(G)$ is a non-root vertex, and    
  $u$ is a primary boundary neighbour of $v$. We say $u$ is a {\em good   neighbour} of $v$, if one of the following holds:
  \begin{itemize}
  	\item $|\tilde{L}(u) \cap \tilde{L}(v)| \le 1$, or
  	\item $|\tilde{L}(u)|=4$.
  \end{itemize}    
\end{definition}

\begin{definition} A $(\star,  2)$-list assignment of a rooted plane graph $(G, v_1v_2)$ is  {\em valid}    if    $|L(v)| =4$ for each interior vertex $v$, and  one of the following holds:
	\begin{enumerate}
		\item[(A)]  $|L(v_1,v_2)| \ge 1$ and $|L(v)| \ge 3$ for each non-root boundary vertex $v$.  
		\item[(B)] $|L(v_1,v_2)| \ge 2$, and there exists a unique non-root boundary vertex $v^*$ such that   $|L(v)| \ge  3$ for $v \in B(G)-\{v_1,v_2, v^*\}$,   $|L(v^*)| =2$ and $v^*$ has a   good   neighbour.  
	\end{enumerate}
\end{definition}

 Assume $L$ is a valid list assignment of $(G, v_1v_2)$,  $v^*$ is a non-root boundary vertex with $|L(v^*)|=2$ and  $u$ is a good neighbour of $v^*$. If 
 $|L(u)|=4$, then we may delete one colour from $L(u) \cap L(v^*)$ so that $|L(u) \cap L(v^*)| \le 1$. So if $L$ is a valid list assignment of $(G, v_1v_2)$,   and  $u$ is a good neighbour of $v^*$, then we assume that   $|\tilde{L}(u) \cap L(v)| \le 1$. However to prove that $u$ is a good neighbour of $v^*$, it suffices to prove that either $|L(u)|=4$ or  $|\tilde{L}(u) \cap L(v)| \le 1$.

\begin{theorem}
	\label{thm-main}
 If $L$ is a valid list assignment of a rooted plane graph $(G,v_1v_2)$, then
	there exists an $L$-colouring of  $(G, v_1v_2)$. 
\end{theorem}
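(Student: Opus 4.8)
The plan is to prove Theorem \ref{thm-main} by induction on $|V(G)|$, following the Thomassen-style paradigm of precolouring a boundary edge and then peeling the boundary, with the list-size bookkeeping dictated by the separation condition. The base cases are when $G$ is the single edge $v_1v_2$ (any pair in $L(v_1,v_2)$ colours it) or a triangle, both checked directly. For the inductive step I would split according to whether the boundary cycle $B(G)$ has a chord. The two clauses (A) and (B) in the definition of a valid assignment are engineered precisely so that the induction never leaves the class of valid instances: clause (A) is the generic state, in which every non-root boundary list has size at least $3$, while clause (B) records the single size-$2$ boundary vertex $v^*$ that a reduction is permitted to create, together with the good-neighbour certificate guaranteeing that this ``debt'' can later be discharged.

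\textbf{Chord case.} Suppose $B(G)$ has a chord $xy$, splitting $G$ into two near-triangulations $G_1\ni v_1v_2$ and $G_2$ glued along $xy$. I would apply the induction hypothesis to $(G_1,v_1v_2)$ — whose restricted assignment is again valid, since the boundary vertices of $G_1$ are boundary vertices of $G$ together with $x,y$, so all their list sizes are inherited — and obtain in particular colours $f(x),f(y)$. I would then root $G_2$ at $xy$ with $L(x,y)=\{(f(x),f(y))\}$ and observe that this satisfies clause (A) for $G_2$: the pair-list is nonempty, every non-root boundary vertex of $G_2$ lies on $B(G)$ and so has a list of size at least $3$, and interior vertices keep size $4$. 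Pasting the two colourings along $xy$ finishes this case. The genuine bookkeeping is the routing of the special vertex $v^*$: one sends it to the side containing it, and because the primary boundary neighbours are recomputed with respect to the new root on that side, one must re-establish the good-neighbour certificate there. This is exactly the point at which it matters that a good neighbour is characterised purely by list sizes, independently of the root.

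\textbf{No-chord case.} Now $B(G)$ is chordless; writing the boundary as $v_1v_2\cdots v_p$, I would work at the boundary neighbour $v_p$ of $v_1$, whose neighbourhood is a fan $v_1,u_1,\dots,u_m,v_{p-1}$, and delete $v_p$ to obtain a smaller near-triangulation in which the $u_i$ become boundary vertices. In Thomassen's $5$-list argument one reserves two colours for the deleted vertex and strikes both from every fan neighbour; here that is fatal, because interior lists have size exactly $4$ and would collapse to $2$. The key simplification available in the $(\star,2)$ setting is that reserving a \emph{single} colour $c\in L(v_p)\setminus\{f(v_1)\}$ already suffices: striking $c$ from every neighbour of $v_p$ drops each interior $u_i$ only from $4$ to $3$, and guarantees that after the recursive colouring no neighbour of $v_p$ uses $c$, so $v_p$ can be coloured $c$ at the end. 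The one neighbour that may fall to size $2$ is the boundary vertex $v_{p-1}$; this is precisely the controlled $v^*$ of clause (B), so I would colour the roots from a pair of $L(v_1,v_2)$, reduce $v_p$ in this way, and recurse on the valid, strictly smaller instance. When the input is already in clause (B), one must instead first discharge the existing $v^*$ (using its good neighbour to choose a harmless colour for it and deleting it) so that the reduction does not produce a second size-$2$ vertex.

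\textbf{Main obstacle.} The crux lies entirely in the no-chord reduction: keeping the instance valid when interior vertices carry only four colours. The delicate requirements are that the new assignment still be $(\star,2)$-separated, that at most one boundary vertex be pushed to size $2$, and — hardest of all — that this vertex genuinely possess a good neighbour, so that one lands in clause (B) rather than outside the valid class. Since after the reduction the primary boundary neighbours of the new $v^*$ are $v_{p-2}$ and $u_m$, neither of which need have a size-$4$ list, the good-neighbour property is not automatic, and I expect the reserved colour $c$ and the reduction site to have to be chosen so as to force $|\tilde L(u_m)\cap \tilde L(v_{p-1})|\le 1$ (or to relocate the reduction). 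This will require a finer case analysis according to the size of $L(v_1,v_2)$ (clause (A) versus (B)), whether the current $v^*$ coincides with or is adjacent to $v_p$, and the intersection pattern of $\tilde L(v_p)$ with the lists of its fan; the good-neighbour definition, permitting either a size-$4$ list or intersection at most $1$ with the size-$2$ list, is exactly the invariant I expect to make this analysis close.
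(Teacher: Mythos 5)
Your overall architecture (induction on $|V(G)|$, chord versus no-chord dichotomy, validity of $L$ as the inductive invariant, clause (B) as the record of the one permitted size-$2$ debt) matches the paper's proof. But there is a genuine gap in your chord case, and it is precisely where the technical heart of the paper lies. When the chord $xy$ separates the root edge (in $G_1$) from the special vertex $v^*$ (in $G_2$), rooting $G_2$ at $xy$ with the single pair $\{(f(x),f(y))\}$ does \emph{not} yield a valid assignment: clause (A) fails because $|L(v^*)|=2$, and clause (B) explicitly requires $|L(x,y)|\ge 2$. Your remark that one must ``re-establish the good-neighbour certificate'' addresses only the minor half of the problem (and that half is easy, since the primary boundary neighbours of $v^*$ are unchanged). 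The missing idea is that you must produce a \emph{second} $L$-colouring $f'$ of $(G_1,v_1v_2)$ with $(f'(x),f'(y))\ne(f(x),f(y))$, so that $G_2$ can be rooted at $xy$ with a two-element pair list. Establishing this (the paper's Claim 1) is not routine: one tries to delete $f(y)$ from $L(y)$ and recolour, and when that fails one is driven through a cascade of forced structure (the vertices $x'$, $y'$, the common interior neighbour $z$, and the precise contents of $L(z)$), terminating in a contradiction with the $(\star,2)$ separation condition. Nothing in your sketch anticipates this, and without it the chord case does not close.

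A secondary bookkeeping error: in your no-chord reduction you colour the roots with a single pair from $L(v_1,v_2)$, delete $v_p$, and accept that $v_{p-1}$ may fall to a list of size $2$ --- but then the resulting instance has a root pair-list of size $1$ together with a size-$2$ boundary vertex, which is not in clause (A) or (B). The paper avoids this by deleting the root vertex $v_2$ itself and re-rooting at the interior edge $v_1w$ (where $w$ is the common neighbour of $v_1,v_2$) with a \emph{two-element} pair list $\{(c_1,c_3),(c_1,c_4)\}$, chosen carefully so that the one vertex pushed to size $2$ acquires a good neighbour; and when (B) already holds it deletes $v^*$ rather than $v_p$, using the good neighbour to select a harmless colour. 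Your instinct that the reserved colour and reduction site must be tuned to force the good-neighbour property is correct, but the re-rooting step is what makes clause (B) attainable at all, and your sketch as written cannot stay inside the valid class.
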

\begin{proof}
	The proof is by induction on $|V(G)|$.
		
	Assume first that $G$ is a triangle $(v_1,v_2,v_3)$.

	If (A) holds, then $|L(v_3)|=3$. Assume $  L(v_1,v_2) = \{(c_1,c_2)\}$. Let $c_3 \in L(v_3) - \{c_1,c_2\}$. Then $f(v_i)=c_i$ for $i=1,2,3$   is an $L$-colouring of $(G, v_1v_2)$.  
	
	Assume (B) holds. Then $|L(v_3)|=2$ and $L(v_1,v_2)=\{(c_1,c_2),(c'_1,c'_2)\}$. We may assume that $v_2$ is a good   neighbour of $v_3$. 
	If $c_1=c'_1$, then $c_2 \ne c'_2$. Let $c_3 \in L(v_3)-\{c_1\}$. One of $c_2, c'_2$ is distinct from $v_3$.  Without loss of generality, we may assume that $c_2 \ne c_3$. Then   $f(v_i)=c_i$ for $i=1,2,3$   is an $L$-colouring of $(G, v_1v_2)$.
	The case $c_2=c'_2$ is symmetric.
	
	Assume $c_1 \ne c'_1, c_2 \ne c'_2$. As $v_2$ is a good   neighbour of $v_3$,     $|L(v_3) \cap \{c_2,c'_2\}| \le 1$. Assume $c_2 \notin L(v_3)$. Let $c_3 \in L(v_3) - \{c_1\}$.  Then   $f(v_i)=c_i$ for $i=1,2,3$   is an $L$-colouring of $(G, v_1v_2)$.

	Assume $|V(G)|=n \ge 4$ and the theorem is true for any smaller rooted plane graphs. 
	
	For a 
	cycle $C$ of $G$, ${\rm Int}[C]$ is the graph of all vertices and edges inside or on $C$, ${\rm Ext}[C]$ is the graph of all vertices and edges outside or on $C$. If $G$ has a separating triangle $C=(u_1,u_2,u_3)$, then let $G_1=  {\rm Ext}[C]$. Then $(G_1, v_1v_2)$ has an $L$-colouring $f$. Let $G_2= {\rm Int}[C]-\{u_3\}$.   Let $L'$ be the list assignment of $(G_2, u_1u_2)$ defined as 
	$L'(u_1,u_2)=\{(f(u_1), f(u_2))\}$, and for $v \in V(G_2)-\{u_1,u_2\}$,
	\[
	L'(v)=\begin{cases} L(v)-\{f(u_3)\}, &\text{if $v \in N_G(u_2)$}, \cr
	L(v), &\text{otherwise}. \cr
	\end{cases}
	\]
	Then $L'$ is a valid list assignment of $(G_2, u_1u_2)$. By induction hypothesis, there is an $L'$-colouring $g$ of $(G_2, u_1u_2)$. The union of $f$ and $g$ is an $L$-colouring of $(G, v_1v_2)$. 
	
	In the following, we assume that $G$ has no separating triangle. 
		
		\medskip
		\noindent
		{\bf Case 1}
		$B(G)$ has a chord $xy$. 
		
		Let $G_1,G_2$ be the two subgraphs of $G$ separated by $xy$, (i.e.,  $G_1,G_2$ are connected  induced subgraphs of $G$  with $V(G_1)\cap V(G_2)=\{x,y\}$ and $V(G_1)\cup V(G_2)=V(G)$), and assume $ G_1$   contains the root edge $v_1v_2$. 
		
		\medskip
		\noindent
		{\bf Case 1(i)}	There is a chord $xy$ such  that $|L(v)|=3$ for all $v \in B(G_2) - \{x,y\}$.
		
			Let $L_1$ be the restriction of $L$ to $(G_1, v_1v_2)$. 
		Then  $L_1$ is a valid list assignment of $(G_1,v_1v_2)$.	By induction hypothesis, there exists an $L_1$-colouring $f$ of $(G_1, v_1v_2)$.
		
Let $L_2$ be the list assignment of $(G_2, xy)$ defined as  $L_2(x,y)=\{(f(x),f(y))\}$ and $L_2(v) = L(v)$ for $ v \in V(G_2) - \{x,y\}$. Then $L_2$ is a valid list assignment of $(G_2, xy)$. 
	By induction hypothesis,   there exists an $L_2$-colouring $g$ of $(G_2, xy)$. The union of $f$ and $g$ is an $L$-colouring of $(G, v_1v_2)$. 
	
\medskip
		\noindent
		{\bf Case 1(ii)}	There is a vertex   $v^* \in B(G) - \{v_1,v_2\}$ with $|L(v^*)|=2$, and every chord $xy$ separates $v^*$ and the root edge, i.e., $v_1v_2 \in E(G_1)$ and $v^* \in V(G_2) - \{x,y\}$.

		We choose the chord $xy$ so that
		$G_1$ is minimum. Then $B(G_1)$ has no  chord.

As there is a vertex   $v^* \in B(G) - \{v_1,v_2\}$ with $|L(v^*)|=2$, we know that $(G, v_1v_2)$ satisfies (B). We may assume that $|L_1(v_1,v_2)| = 2$.

Similarly, $L_1$ is a valid list assignment of $(G_1, v_1v_2)$ and hence there is an $L_1$-colouring $f$ of $(G_1, v_1v_2)$.

\begin{claim}
\label{clm-1}
There is another $L$-colouring $f'$ of $(G_1, v_1v_2)$ for which $(f'(x), f'(y)) \ne (f(x), f(y))$.
\end{claim}

  Assume Claim \ref{clm-1} is true. Let $L_2$ be the list assignment of $(G_2, xy)$   defined as  $L_2(x,y)=\{(f(x),f(y)), (f'(x), f'(y))\}$ and $L_2(v) = L(v)$ for $ v \in V(G_2) - \{x,y\}$.  Note that $\tilde{L}_2(v) \subseteq L(v)$ for $v \in \{x,y\}$, and the primary neighbours of $v^*$ in $(G_2, xy)$ are the same as its primary neighbours in $(G,v_1v_2)$. So $v^*$ has a good neighbour in $(G_2, xy)$. Thus $L_2$ is a valid list assignment of $(G_2, xy)$.

	By induction hypothesis,   there exists an $L_2$-colouring $g$ of $(G_2, xy)$. Depending on 
	$(g(x), g(y)) = (f(x), f(y))$ or $(f'(x), f'(y))$, the
	union of   $g$ and $f$ or the union of $g$ and $f'$ is an $L$-colouring of $(G, v_1v_2)$. To finish the proof of Case 1, it remains to prove Claim \ref{clm-1}. 

\medskip	
\noindent
{\bf Proof of Claim \ref{clm-1}}

Without loss of generality, we assume that   $y \notin \{v_1, v_2\}$. 
Let $L'_1  = L_1$, except that $L'_1(y) = L(y)-\{f(y)\}$. If $L'_1$ is a valid list assignment of $(G_1, v_1v_2)$, then by induction hypothesis, there is an $L'_1$-colouring $f'$ of $(G_1, v_1v_2)$, and we are done.

Thus we may assume that $L'_1$ is not a valid list assignment of $(G_1, v_1v_2)$. This happens only if $|L'_1(y)|=2$ and 
$y$ has no good neighbour in $(G_1, v_1v_2)$. Assume $L'_1(y) = \{c_1,c_2\}$.

As $B(G_1)$ has no chord,  $y$ has exactly two boundary neighbours, and one of them is $x$.  Let $y'$ be the other boundary neighbour of $y$, i.e.,  $N_G(y) \cap B(G_1)=\{x, y'\}$. Then   
		$$\{c_1, c_2\} \subseteq \tilde{L}'_1(x) \cap \tilde{L}'_1(y').$$

	If $x$ is a root vertex, say $x=v_1$, then 
	  $\tilde{L}_1(x) = \{c_1,c_2\}$ (as $L'_1(y) \subseteq \tilde{L}_1(x)$ and $\tilde{L}_1(x)| \le 2$). 
	  
	  Assume $L(v_1,v_2)=\{(c_1, c'_1), (c_2, c'_2)\}$ for some colours $c'_1, c'_2$ (possibly $c'_1=c'_2$). Assume $c_1 \ne f(x)$.
		
Let $L''_1 = L_1$, except that $	L''_1(v_1,v_2)=(c_1, c'_1)$. As $|L''_1(v)|\ge 3$ for all $v \in B(G_1)-\{v_1,v_2\}$,   $L''_1$ is a valid list assignment of $(G_1,v_1v_2)$. Hence there is an $L''_1$-colouring $f'$ of $(G_1,v_1v_2)$. As $f'(x) \ne f(x)$,  Claim \ref{clm-1} is proved.

  Thus we may assume that $x$ is not a root vertex.  
		
		Let $L''_1 =L_1$ except that $L''_1(x) = L_1(x) - \{f(x)\}$. If $L''_1$ is   a valid list assignment of $(G_1, v_1v_2)$, then again we obtain an $L$-colouring $f'$ of $(G_1, v_1v_2)$ with $(f'(x), f'(y)) \ne (f(x), f(y))$ and we are done. 
		
		Thus assume that $L''_1$ is not a valid list assignment. This means that $|L''(x)|=2$ and $x$ has no good neighbour. Let $x'$ be the 
		other   boundary neighbour of $x$. 
		 Then we have $L''_1(x)=\{c_1,c_2\}$ (so $f(x) \ne c_1,c_2$), and 
		$$\{c_1,c_2\} = L(x) \cap L(y) \cap \tilde{L}_1(x') \cap \tilde{L}_1(y').$$
	 
		As $G$ is a near-triangulation of the plane and $G$ has no separating triangle, 
		 $x$ and $y$ have a unique common neighbour $z$ in $G_1$, which  is an interior vertex of $G_1$.
		 
	Since $B(G_1)$ has no chord, it is easy see that at least one of the following holds:
		 	\begin{itemize}
		 		\item $N_{G_1}(x') \cap N_{G_1}(y) - \{z\} = \emptyset$.
		 		\item $N_{G_1}(y') \cap N_{G_1}(x) - \{z\} = \emptyset$.
		 	\end{itemize}
	 By symmetry, we may assume that 
	 $N_{G_1}(x') \cap N_{G_1}(y) - \{z\} = \emptyset$.
	 
	  As $|L(z) \cap L(y)| \le 2$, there exists $i \in \{1,2\}$, that $|L(z) \cap \{f(y), c_i\}| \le 1$. Without loss of generality, we assume  $$|L(z) \cap \{f(y), c_1\}| \le 1.$$
	  
	  Let $$G'_1=G_1-\{x,y\}.$$

	Let $L^*_1$ be the list assignment of $(G'_1, v_1v_2)$ defined as follows:
		\[
		L^*_1(v)=\begin{cases} L(v)-\{c_1, f(y)\}, &\text{if $v =z$}, \cr
		L(v)-\{f(y)\}, &\text{if $v \in N_{G_1}(y)-\{z\}$},\cr
		L(v)-\{c_1\}, &\text{if $v \in N_{G_1}(x)-\{z\}$},\cr
		L(v), &\text{otherwise}.
		\end{cases}
		\]
		and 
		\[
		L^*_1(v_1, v_2)=\begin{cases}
		L(v_1,v_2),
		&\text{  if $x'$ is not a root vertex, or
		$c_1 \notin \tilde{L}(x')$}, \cr 
		   L(v_1,v_2) - \{(c_1, c'_1)\}, &\text{ if $x'=v_1$ and  $  (c_1,c'_1) \in L(v_1,v_2)$}.
		\end{cases}  
		\]

		Note that $|L^*_1(z)| \ge 3$, and $L^*_1(y')= L(y')$ (as $f(y) \notin L(y')$). 
		If $L^*_1$ is  a valid list assignment of $(G'_1, v_1v_2)$, then   there is an $L^*_1$-colouring $f'$ of $(G'_1, v_1v_2)$. 
		By letting $(f'(x), f'(y))=(c_1, f(y))$, we obtain an $L$-colouring of $(G_1, v_1v_2)$ with $(f'(x), f'(y)) \ne (f(x), f(y))$, and we are done. 
		
		Thus we assume that $L^*_1$ is not a valid list assignment of $(G'_1, v_1v_2)$.

	This means that  
	\begin{itemize}
	    \item $x'$ is not a root vertex,   $x'$  is the only  boundary vertex of $G'_1$ with $|L^*_1(x')|=2$, 
	    and $x'$ has no good neighbour.
	\end{itemize} 
	
	Assume $L(x') = \{c_1,c_2,c_3\}$ (and hence $L^*_1(x')=\{c_2, c_3\}$). 
	
	Let $z'$ be the unique common neighbour  of $x$ and $x'$, which is an interior vertex of $G_1$. 
	
		Let $x''$ be the other   neighbour of $x'$ in $B(G_1)$. 
		Then $x''$  is a primary boundary neighbour of $x'$ in $G'_1$. 
		Since $ N_{G_1}(x') \cap N_{G_1}(y) - \{z\} = \emptyset$ and $G$ has no separating triangle, $z'$ is the other primary boundary neighbour of $x'$.

Since 
 $z'$ is not a good neighbour of $x'$, we conclude that  
	\begin{itemize}
	    \item $z'=z$;
	    \item $c_1 \notin L(z), c_2, c_3,  f(y) \in L(z)$ and $c_3 \ne f(y)$.
	\end{itemize}    
	
Now $z'=z$ implies that $N_{G_1}(y') \cap N_{G_1}(x) - \{z\} = \emptyset$.
So we can  repeat the same argument as above, but interchange the roles of $x,x'$ and $y, y'$. Then we  conclude that the following hold:
	\begin{itemize}
	    \item $y'$ is not a root vertex,
	    \item $z$ is adjacent to $y'$,
	    \item $L(y')=\{c_1, c_2, c'_3\}$,
	    \item $c_2,c'_3, f(x) \in L(z)$. 
	\end{itemize}

	As $\{c_2, c_3, c'_3, f(x), f(y) \} \subseteq L(z_1)$, we have   $c_3=c'_3$ (as $|L(z)|=4$ and the other colours are pairwise distinct). I.e., 
	$$L(z) = \{c_2,c_3,f(x),f(y)\}.$$

	As $L^*_1(x')=\{c_2, c_3\} \subseteq \tilde{L}(x'')$,  we know that $c_1 \notin \tilde{L}(x'')$. 
	
		Let $$G''_1=G_1-\{x',x,y\}.$$
		
	Let $L^{**}_1$ be the list assignment of $(G''_1, v_1v_2)$ defined as follows:
		 
		\[
		L^{**}_1(v)=\begin{cases} L(v)-\{c_1, c_2, f(y)\}, &\text{if $v =z$}, \cr
		L(v)-\{f(y)\}, &\text{if $v \in N_{G_1}(y)$},\cr
		L(v)-\{c_2\}, &\text{if $v \in N_{G_1}(x)$},\cr
		L(v)-\{c_1\}, &\text{if $v \in N_{G_1}(x')$}, \cr
		L(v), &\text{otherwise}.
		\end{cases}
		\]
		and 
		\[
		L^{**}_1(v_1, v_2)=\begin{cases}
		L(v_1,v_2),
		&\text{  if $x''$ is not a root vertex, or
		$c_1 \notin \tilde{L}(x'')$}, \cr 
		   L(v_1,v_2) - \{(c_1, c'_1)\}, &\text{ if $x''=v_1$ and  $  (c_1,c'_1) \in L(v_1,v_2)$}.
		\end{cases}  
		\]
If $L^{**}_1$ is a valid list assignment of $(G''_1, v_1v_2)$, then there is an $L^{**}_1$-colouring $f'$ of $(G''_1, v_1v_2)$,
which extends to an $L$-colouring of $(G_1,v_1v_2)$ by letting $f'(x')=c_1, f'(x)=c_2$ and $f'(y)=f(y)$, and we are done. 

Thus we may assume that $L^{**}_1$ is not a valid list assignment of $(G''_1, v_1v_2)$.   It is easy to check that $z$ is the only vertex of $B(G''_1)-\{v_1, v_2\}$ with $|L^{**}(z)| < 3$. Note that $$L^{**}(z) = L(z) - \{c_2, f(y)\} = \{c_3,f(x)\}.$$
The only reason that $L^{**}_1$ is not a valid list assignment of $(G''_1, v_1v_2)$ is that $z$ has no good neighbour. Let $w_1,w_2$ be the two primary boundary neighbours of $z$ in $(G''_1, v_1v_2)$. We have 
$$\{c_3, f(x)\} \subseteq \tilde{L}(w_1), \tilde{L}(w_2).$$
This implies that $y'$ is not a primary boundary neighbour of $z$ in $(G''_1, v_1v_2) $ (although $y'$ is a boundary neighbour of $z$ in $G''_1$). 

We repeat the above argument, but interchange the roles of $x,x'$ and $y,y'$. We conclude that for  the two primary boundary neighbours $w'_1,w'_2$ of $z$ in $(G_1-\{x,y,y'\}, v_1v_2)$, 
		$$\{c_3, f(y)\} \subseteq \tilde{L}(w'_1), \tilde{L}(w'_2).$$
This means that $x'$ is not a primary neighbour of $z$ in $(G_1-\{x,y,y'\}, v_1v_2)$. But then the primary neighbours of $z$ in $(G_1-\{x,y,y'\}, v_1v_2)$ and $(G_1-\{x',x,y\}, v_1v_2)$ are the same. I.e., $w'_1=w_1$ and $w'_2=w_2$. But then for $i=1,2$,
		$$\{c_3, f(x),f(y)\} \subseteq \tilde{L}(w_i) \cap L(z),$$
		contrary to the assumption that $L$ is $(\star,  2)$-list assignment of $(G, v_1v_2)$.  
		This completes the proof of Claim \ref{clm-1}, and hence the proof of Case 1.
		
	\medskip
	\noindent
	{\bf Case 2}
	$B(G)$ has  no chord. 
		\medskip
		
	\medskip
	\noindent
	{\bf Case 2(i)}	  (A) holds, and $L(v_1,v_2)=\{(c_1,c_2)\}$. 
	
	Let   $u$ be the other boundary neighbour of $v_2$ in $G$.
Similarly, as $G$ has no separating triangle and $B(G)$ has no chord,   $v_1$ and $v_2$ has a unique common neighbour $w$, and $u$ and $v_2$
  have a unique common neighbour $z$, and $w,z$ are interior vertices of $G$ (and possibly $w=z$).
  
  Let $G'=G-v_2$ and 
	let $L'$ be the list assignment of $(G', v_1w)$ defined as
	\[
	L'(v) = \begin{cases}  
	L(v)-\{c_2\}, & \text{ if $v \in N_G(v_2) - \{v_1, w\}$}, \cr
	L(v), &\text{ if $v \in V(G)-N_G(v_2)$},
	\end{cases}   
	\] 
	and  
	\[
	L'(v_1,w)=\{(c_1, c_3), (c_1, c_4)\}, \text{where $c_3, c_4 \in L(w) -\{c_1,c_2\}$}.
	 \]
	
	In the definition above, if $|L(w) -\{c_1,c_2\}| \ge 3$, then $c_3, c_4$ are   arbitrarily chosen  from $L(w) -\{c_1,c_2\}$, with one exception:

	If $c_2 \notin L(w)$,    $w=z$ and $L(w) \cap L(u) \ne \emptyset$, then let $c' \in L(w) \cap L(u)$, and we choose
	$c_3, c_4 \in L(w) -\{c_1,c'\}$.

	 We shall show that $L'$ is valid list assignment of $(G', v_1w)$.

	If $c_2 \notin L(u)$, then $|L'(u)|=|L(u)|=3$, and (A) holds for $L'$ and $(G', v_1w)$. So $L'$ is a valid list assignment of $(G', v_1w)$. 
	
	Assume $c_2 \in L(u)$ and hence $|L'(u)|=2$. If $z \ne w$, then   either $c_2 \in L(z)$ and hence $|L'(z) \cap L'(u)| \le 1$
	or $|L'(z)|=4$. So   $z$ is a good   neighbour of $u$ in $(G', v_1w)$, and $L'$ is a valid list assignment of $(G', v_1w)$ ((B) holds for $L'$ and $(G', v_1w)$). 
	
	If $z = w$, then by our choice of $c_3, c_4$, we know that    $|\tilde{L}'(w) \cap L'(u)|  \le 1$,
and hence $w$ is a good neighbour of $u$, and  $L'$ is a valid 
	list assignment of $(G', v_1w)$ ((B) holds for $L'$ and $(G', v_1w)$).

	By induction hypothesis,    $(G', v_1w)$  has an $L'$-colouring $f$. By letting $f(v_2)=c_2$, we obtain an $L$-colouring of $(G, v_1v_2)$.  
	
\medskip
\noindent
{\bf Case 2(i)}	 (B) holds, and  $v^* \in B(G)$, $|L(v^*)|=2$, and $u$ is a good   neighbour of $v^*$. 
	
	It may happen that $v^*$ has two good neighbours. In this case, the good neighbour $u$ is usually arbitrarily chosen, unless $v^*$ is adjacent to a root vertex $v_i$ for some $i \in \{1,2\}$ and $|\tilde{L}(v_i)|=1$. In this case, we let $u=v_i$.

	 Let $w$ be the other boundary neighbour of $v^*$, and  let $z$ be the common neighbours of $v^*$ and $w$. Similarly, we know that the vertex $z$ is unique and is an interior vertex of $G$.
	
	 By our choice of $u$, we know that 
  either $w \ne v_1, v_2$, or $w=v_i$ for some $i \in \{1,2\}$ and $|\tilde{L}(v_i)|=2$  (for otherwise, we would have chosen $w$ as the   good neighbour of $v^*$).  
	
	Let $$G'=G-\{v^*\}, \ c \in L(v^*) - L(u).$$
	
	If $w$ is not a root vertex, then let $L'$ be the list assignment of $(G', v_1v_2)$ defined as $L'(v_1,v_2)=L(v_1,v_2)$, and for $v \in V(G')-\{v_1,v_2\}$,
	\[
	L'(v) = \begin{cases}  
	L(v)-\{c\}, & \text{ if $v \in N_G(v^*)$}, \cr
	L(v), &\text{ if $v \in V(G)-N_G(v^*)$}.
	\end{cases}   
	\]  
	If $|L'(w)| \ge 3$, then $|L'(v)| \ge 3$ for every $v \in B(G')-\{v_1,v_2\}$ and hence $L'$ is a valid list assignment of $(G', v_1v_2)$. Otherwise,
  $w$ is the unique boundary vertex of $G'$ with $|L'(w)|=2$.  
	Observe that either $c \in L(z)$ and hence $|L'(z) \cap L'(w)| \le 1$, or $|L'(z_1)|=|L(z_1)|=4$. In any case, $z$ is a good neighbour of $w$, and hence $L'$ is a valid list assignment of $(G, v_1v_2)$.   By induction hypothesis, there is an $L'$-colouring $f$ of $(G', v_1v_2)$. By letting $f(v^*)=c$, we obtain an $L$-colouring of $(G', v_1v_2)$. 
	
	Assume $w$ is a root vertex, say $w=v_1$. If $c \notin \tilde{L}(v_1)$, then the argument still works. Assume $c \in \tilde{L}(v_1)$. Without loss of generality, we may assume that $|L(v_1, v_2)|=2$, say $L(v_1, v_2)=\{(c, d), (c', d')\}$. 
	As observed above, $|\tilde{L}(v_1)|=2$, i.e., $c \ne c'$ (and it is possible that $d=d'$). 
	Let $L'$ be the list assignment of $(G', v_1v_2)$ defined as $L'(v_1, v_2)=\{(c',d')\}$ and 
		\[
		L'(v) = \begin{cases}  
		L(v)-\{c\}, & \text{ if $v \in N_G(v^*)$}, \cr
		L(v), &\text{ if $v \in V(G)-N_G(v^*)$}.
		\end{cases}   
		\]  
		Then for all $v \in B(G')-\{v_1, v_2\}$, $|L'(v)| \ge 3$. Hence $L'$ is a valid list assignment of $(G', v_1v_2)$. By induction hypothesis, there is an $L'$-colouring $f$ of $(G', v_1v_2)$. By letting $f(v^*)=c$, we obtain an $L$-colouring of $(G, v_1v_2)$. 
		
		This completes the proof of Theorem \ref{thm-main}.
	\end{proof}
	
	It is obvious that Theorem \ref{main} follows from Theorem \ref{thm-main}.

\section{Some Remarks and Questions}

For list colouring of planar graphs with list of separation, the following conjecture was propose in   \cite{Skr} and remains open:

\begin{conjecture}
Every planar graph is $(3,  1)$-choosable.
\end{conjecture}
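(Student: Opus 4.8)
The plan is to prove Theorem~\ref{thm-main} by induction on $|V(G)|$, in the style of Thomassen's proof that planar graphs are $5$-choosable: the strengthened notion of a \emph{valid} list assignment is precisely what is needed to make the induction self-supporting, and the burden of the proof is to verify at every reduction step that the modified assignment is again valid. For the base case, when $G=(v_1,v_2,v_3)$ is a single triangle, I would colour $v_3$ by hand, splitting on whether (A) or (B) holds; in case (B) the two admissible root pairs together with the good-neighbour condition at $v_3$ leave a legal colour.

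For the inductive step I would first dispose of separating triangles. Given a separating triangle $C=(u_1,u_2,u_3)$, I would colour $\mathrm{Ext}[C]$ first by induction, then delete $u_3$, push the forbidden colour $f(u_3)$ out of the lists of its interior neighbours, and colour $\mathrm{Int}[C]-u_3$ rooted at $u_1u_2$ with root list $\{(f(u_1),f(u_2))\}$; one checks the reduced assignment stays valid. After this I may assume $G$ has no separating triangle, so that boundary vertices have unique interior common neighbours, and I split on whether $B(G)$ has a chord.

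If $B(G)$ has a chord $xy$, I separate $G$ into $G_1$ (holding the root edge) and $G_2$. When some chord can be chosen so that every non-root boundary vertex of $G_2$ has list size $\ge 3$, I would colour $G_1$ first and then colour $G_2$ rooted at $xy$ with root list $\{(f(x),f(y))\}$. The delicate case is when a size-$2$ boundary vertex $v^*$ is trapped on the $G_2$ side by \emph{every} chord; then one colouring of $G_1$ is not enough, and I would need two $L$-colourings $f,f'$ of $G_1$ with $(f(x),f(y))\neq(f'(x),f'(y))$, so that the root list of $G_2$ has two admissible pairs and case (B) applies to $G_2$. If instead $B(G)$ has no chord, I would remove a single boundary vertex: under (A) I delete $v_2$ and reroot at $v_1w$, where $w$ is the common neighbour of $v_1,v_2$, choosing the two root pairs of the new graph so as to protect the good-neighbour structure at the other neighbour $u$ of $v_2$ (with a special choice when $w$ coincides with the common neighbour of $u$ and $v_2$); under (B) I delete the size-$2$ vertex $v^*$, colour the rest, and extend by a colour of $L(v^*)$ avoided by its good neighbour $u$.

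The main obstacle will be producing the second colouring in the chord case. The natural attempt—delete $f(y)$ from $L(y)$ and recolour—can fail only if $y$ collapses to a size-$2$ vertex with no good neighbour; one is then forced to peel off $x$, its other boundary neighbour $x'$, and so on, each failure pinning down a precise membership pattern of the few colours involved in the list of the common interior neighbour $z$ of $x$ and $y$. The key is that these forced relations accumulate until the size-$4$ list $L(z)$ is determined completely, at which point the $(\star,2)$ hypothesis $|\tilde L(w)\cap L(z)|\le 2$ at a shared primary boundary neighbour $w$ of $z$ is violated, yielding the contradiction. Arranging this peeling so that it terminates, and checking validity at each intermediate assignment, is the technical heart of the argument.
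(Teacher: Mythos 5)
You have not addressed the statement you were given. The statement is the conjecture (due to \v{S}krekovski) that every planar graph is $(3,1)$-choosable, which the paper explicitly records as an \emph{open problem}: it offers no proof, and none is known. What you have written is instead a summary of the paper's proof of Theorem~\ref{thm-main}, the $(4,2)$-choosability result. Every quantitative detail in your sketch belongs to that other setting: interior lists of size $4$, boundary lists of size $3$, a single exceptional vertex with a list of size $2$, the good-neighbour condition $|\tilde{L}(u)\cap\tilde{L}(v)|\le 1$ or $|\tilde{L}(u)|=4$, and the final contradiction in which three forced colours land in an intersection of size at most $2$. As a description of the proof of Theorem~\ref{thm-main} your outline is essentially faithful, but it is not a proof, or even an attempted proof, of the $(3,1)$ conjecture.

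Nor does the argument transfer. The natural $(3,1)$ analogue of the paper's induction would keep lists of size $3$ in the interior, size $2$ on the boundary, and size $1$ at one exceptional vertex, with a good-neighbour condition of the form $|\tilde{L}(u)\cap\tilde{L}(v)|=0$ or $|\tilde{L}(u)|=3$. The slack that makes the $(4,2)$ peeling terminate --- in particular the step where the forced relations overdetermine the four-element list $L(z)$ and violate $|\tilde{L}(w)\cap L(z)|\le 2$ --- relies on having one more colour of room than the separation bound at every stage, and there is no known way to reproduce that collision with lists of size $3$ and pairwise intersections of size at most $1$. If you intend to attack the conjecture, you need a genuinely new idea, not a re-indexing of this induction; if you intended to summarize the proof of Theorem~\ref{thm-main}, you have summarized the right proof of the wrong statement.
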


There are some other restrictions on list assignments are studied in the literature \cite{CK2017,KR2002,Zhu}. 
We say a list assignment $L$ is {\em symmetric} if colours in the lists are integers and for each $v$, for each integer $i$, $i \in L(v)$ implies that $-i \in L(v)$. A graph $G$ is called {\em weakly $k$-choosable} if $G$ is $L$-colourable for any symmetric $k$-list assignment $L$ of $G$.
The following  conjecture, which is a strengthening of the Four Colour Theorem,  was proposed by     K\"{u}ndgen and Ramamurthi \cite{KR2002} and remains open.

\begin{conjecture}
Every planar graph is weakly $4$-choosable.  
\end{conjecture}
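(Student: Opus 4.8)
The plan is to prove Theorem~\ref{thm-main} by induction on $|V(G)|$, in the spirit of Thomassen's argument for list colouring near-triangulations, with the root edge $v_1v_2$ playing the role of a precoloured pair. For the base case, when $G$ is a triangle $(v_1,v_2,v_3)$, I would argue directly: under configuration (A) the unique admissible ordered pair for $(v_1,v_2)$ together with $|L(v_3)|\ge 3$ leaves a legal colour for $v_3$; under (B) the two admissible root pairs combined with the good-neighbour hypothesis at $v_3$ supply enough freedom to avoid a monochromatic conflict, after a short case split on whether the two pairs agree in their first or second coordinate. For the inductive step I would first clear away separating triangles $C=(u_1,u_2,u_3)$: colour the exterior $\mathrm{Ext}[C]$ by induction, read off the three colours on $C$, delete the forbidden colour from the appropriate neighbours so that the interior instance becomes valid, and glue the two colourings along $C$.

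The heart of the induction is the analysis of the boundary cycle $B(G)$ once $G$ is free of separating triangles. If $B(G)$ has a chord $xy$, I would split $G$ into $G_1$ (containing the root edge) and $G_2$, colour $G_1$ first, and then colour $G_2$ using the pair $(f(x),f(y))$ as its root list. When the size-two special vertex $v^*$ lies in $G_1$, or when some chord keeps $v^*$ on the root side, validity transfers to both pieces immediately. The delicate situation is when $v^*$ is trapped in $G_2$ by \emph{every} chord: a single precoloured pair on $(x,y)$ need not leave $G_2$ valid, and one must instead furnish $G_2$ with two distinct admissible pairs so that configuration (B) is available. I would therefore isolate this as an auxiliary claim asserting that $(G_1,v_1v_2)$ admits a second $L$-colouring $f'$ with $(f'(x),f'(y))\ne(f(x),f(y))$, choosing the chord so that $G_1$ is minimal and hence $B(G_1)$ is itself chordless.

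When $B(G)$ has no chord I would reduce by deleting a single boundary vertex and pushing its colour constraint onto its neighbours. Under (A) I would delete the boundary neighbour $u$ of a root vertex, precolour through the unique interior common neighbour $w$ of the root pair, and select the two admissible root pairs of the reduced instance on $(G-v_2,v_1w)$ carefully — with a dedicated adjustment when the two relevant interior common neighbours coincide — so that the good-neighbour machinery keeps the single possibly-deficient vertex valid. Under (B) I would delete $v^*$ itself, assign it a colour outside $L(u)$ where $u$ is its good neighbour, and strike that colour from the remaining neighbours of $v^*$; then the other boundary neighbour $w$ of $v^*$ is the only candidate deficient vertex, and the common neighbour $z$ of $v^*$ and $w$ is forced to be a good neighbour of $w$. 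A short separate subcase handles $w$ being a root vertex, resolved by trimming the admissible root pairs.

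The main obstacle, and the only place the separation hypothesis $|L(x)\cap L(y)|\le 2$ is genuinely spent, is proving the auxiliary claim. I would try the obvious reductions — delete $f(y)$ from $L(y)$, or $f(x)$ from $L(x)$, and reapply induction — and show that if every such reduction fails, the failures force an extremely rigid local picture around the interior vertex $z$ that is the unique common neighbour of $x$ and $y$. Peeling off the boundary neighbours $x',y'$ of $x,y$ and tracking which colours their forced size-two lists must deposit into $L(z)$, I expect to accumulate colours $c_2,c_3,f(x),f(y)$ (and the symmetric counterpart) all lying in the size-four list $L(z)$, while simultaneously exhibiting a boundary neighbour $w$ of $z$ whose list contains three of them — contradicting $|L(w)\cap L(z)|\le 2$. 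The technical crux is the bookkeeping of these forced colours together with the verification that the two symmetric peeling arguments, in $(x,x')$ and in $(y,y')$, pin down the \emph{same} primary neighbours of $z$, so that the three-colour overlap at $w$ is unavoidable.
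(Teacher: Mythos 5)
The statement you were asked to prove is the K\"{u}ndgen--Ramamurthi conjecture that every planar graph is \emph{weakly} $4$-choosable, i.e.\ $L$-colourable for every $4$-list assignment $L$ whose lists are symmetric sets of integers ($i \in L(v)$ implies $-i \in L(v)$). What you have written is instead an outline of the proof of Theorem~\ref{thm-main}, the paper's main result on $(4,2)$-choosability, where the hypothesis is the separation condition $|L(x)\cap L(y)| \le 2$ on every edge. These are different and incomparable restrictions: a symmetric $4$-list assignment can assign the same list $\{1,-1,2,-2\}$ to every vertex, in which case $|L(x)\cap L(y)| = 4$ on every edge and the separation hypothesis fails completely. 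Nothing in your argument uses, or could use, the symmetry $i \in L(v) \Rightarrow -i \in L(v)$; the only place any hypothesis on the lists is ``genuinely spent,'' as you say yourself, is the separation bound. So the entire induction establishes a theorem about a different class of list assignments and yields no information about the conjecture.

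This is not a repairable gap in a proof of the stated conjecture --- it is a proof of a different statement. The paper itself records the weak $4$-choosability conjecture as open (it is listed in the concluding remarks precisely as an unresolved strengthening of the Four Colour Theorem), and the $(4,2)$-choosability theorem does not imply it. As a summary of the paper's actual proof of Theorem~\ref{thm-main}, your outline is faithful: the triangle base case split on (A)/(B), the removal of separating triangles, the chord case with the two-colouring Claim~\ref{clm-1} and the minimal choice of $G_1$, the chordless case with deletion of a boundary vertex, and the final contradiction via three colours of $L(z)$ landing in a common neighbour's list all match the paper. But none of that answers the question that was posed.
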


A $t$-common $k$-list assignment of a graph $G$ is a $k$-list assignment $L$ of $G$ such that $|\bigcap_{v \in V(G)} L(v)| \ge t$. It was asked by Choi and Kwon \cite{CK2017} whether every planar graph $G$ is $L$-colourable for any 2-common 4-list assignment $L$. A positive answer would be a strengthening of the Four Colour Theorem. But Kemnitz and Voigt \cite{KV} proved that the answer to this question is negative.


\begin{thebibliography}{99}\frenchspacing

\bibitem{BCD}
Z. Berikkyzy,  C. Cox, M. Dairyko, K.  Hogenson, M. Kumbhat, B. Lidick\'{y}, K. Messerschmidt, K. Moss, K. Nowak, K. F.   Palmowski, D. Stolee,   {\em 
$(4;2)$-choosability of planar graphs with forbidden structures},  Graphs Combin. 33 (2017), no. 4, 751–787.

 

\bibitem{CLS}
I. Choi,  B. Lidicky, and D. Stolee,  {\em On choosability with separation of planar graphs with forbidden
	cycles}, Journal of Graph Theory, 81 (2016), no. 3, 283–306.

	 \bibitem{CK2017}
	 H. Choi and Y. Kwon, {\em
	 	On t-common list-colorings},
	 Electron. J. Combin. 24 (2017), no. 3, Paper 3.32, 10 pp.

 
\bibitem{DEKO} Z. Dvo\v{r}\'{a}k, L. Esperet, R. Kang and K. Ozeki, {\em Single-conflict colouring},  
J. Graph Theory 97 (2021), no. 1, 148–160.


\bibitem{EKT} L. Esperet, R. Kang and S. Thomass\'{e}, {\em Separation choosability and dense bipartite induced subgraphs}, 
Combin. Probab. Comput. 28 (2019), no. 5, 720–732.


\bibitem{FKK}  Z. F\"{u}redi, A. Kostochka and M. Kumbhat, {\em Choosability with separation of complete multipartite graphs and hypergraphs}, J. Graph Theory, 76(2014), 129--137.



\bibitem{HZ}
J. Hou and H. Zhu, {\em 
Choosability with union separation of triangle-free planar graphs}, 
Discrete Math. 343 (2020), no. 12, 112137, 10 pp.

\bibitem{KV} A. Kemnitz and M. Voigt,  {\em A note on non-4-list colorable planar graphs}, Electron. J. Combin. 25 (2018), no. 2, Paper No. 2.46, 5 pp.

\bibitem{KL2015}
H. Kierstead and B. Lidicky,   {\em On choosability with separation of planar graphs with lists of different  sizes}, Discrete Mathematics, 339(10):1779--1783, 2015.

 \bibitem{KTV1998}
 J. Kratochv\'{i}l, Z. Tuza, and M. Voigt,   {\em Brooks-type theorems for choosability with separation}, Journal  of Graph Theory, 27(1):43--49, 1998.
		
\bibitem{KMS}
 M.	Kumbhat, K. Moss and D. Stolee, 
{\em		Choosability with union separation},
		Discrete Math. 341 (2018), no. 3, 600–605.
		
\bibitem{KR2002} A. K\"{u}ndgen, R. Ramamurthi, {\em Coloring face-hypergraphs of graphs on surfaces},  Journal of Combinatorial Theory Ser.   B  85 (2002), 307–337.
				
 
				
			
\bibitem{Mirzakhani} 	M. Mirzakhani, {\em A small non-4-choosable planar graph}, Bull. Inst. Combin. Appl.
17 (1996), 15–18.

		
\bibitem{Tho1994}
 C. Thomassen, {\em Every planar graph is 5-choosable}, Journal of Combinatorial Theory, Series B,  62(1):180--181, 1994.
								
\bibitem{Voigt1993} M. Voigt, {\em  List colourings of planar graphs}, Discrete Mathematics, 120(1):215--219, 1993.

\bibitem{Skr} R. \v{S}krekovski,   {\em A note on choosability with separation for planar graphs}, Ars Combinatoria, 58:169--174,  2001.

\bibitem{Smith} E. Smith-Roberge, {\em 
	On the choosability with separation of planar graphs and its correspondence colouring analogue}, https://arxiv.org/abs/2203.13348.

\bibitem{WWY} Y. Wang, J. Wu and D. Yang, {\em On the (3,1)-choosability of planar graphs without adjacent cycles of length 5, 6, 7},  Discrete Math. 342 (2019), no. 6, 1782–1791.

\bibitem{Zhu} X. Zhu, {\em A refinement of choosability of graphs}, Journal of Combinatorial Theory Ser.   B 141 (2020), 143–164.
\end{thebibliography}
\end{document}